\title
{The best constant in a fractional Hardy inequality}
\author[K{.} Bogdan and B{.} Dyda]
{Krzysztof Bogdan and Bart{\l}omiej Dyda}
\date{April 4, 2008}
\address{Institute of Mathematics and Computer Science, Wroc{\l}aw University of Technology,
Wybrze\.ze Wyspia\'nskiego 27,
50-370 Wroc{\l}aw, Poland}
\email{bogdan@pwr.wroc.pl bdyda@pwr.wroc.pl}
\theoremstyle{plain}
\newtheorem{thm}{Theorem}
\newtheorem{lem}[thm]{Lemma}
\theoremstyle{definition}
\theoremstyle{remark}
\newtheorem*{rem*}{Remark}
\newtheorem*{ack}{Acknowledgement}
\newcommand{\Rd}{{\mathbb{R}^d}}
\newcommand{\R}{\mathbb{R}}
\DeclareMathOperator{\dist}{dist}
\DeclareMathOperator{\supp}{supp}
\date{\today}
\begin{document}
\sloppy \footnotetext{\emph{2000 Mathematics Subject
Classification:} Primary 26D10, secondary 31C25, 31B25, 46E35.\\
\emph{Key words and phrases:} fractional Hardy inequality,
  best constant, half-space, fractional Laplacian, censored stable process.
\\ Research partially supported by KBN 1 P03A 026 29}

\begin{abstract}
We prove an optimal Hardy inequality
for the fractional Laplacian on the half-space.
\end{abstract}
\maketitle
\section{Main result and discussion}\label{sec:i}
Let $0<\alpha<2$ and $d=1,2,\ldots$.
The purpose of this note is to prove the following Hardy-type inequality in
the half-space $D=\{x=(x_1,\ldots,x_d)\in \Rd:\,x_d>0\}$.
\begin{thm}\label{hardy}
For every $u\in C_c(D)$,
\begin{equation}\label{Hardyin}
\frac{1}{2}
\int_D \! \int_D
\frac{(u(x)-u(y))^2}{|x-y|^{d+\alpha}} \,dx\,dy
\geq {\kappa_{d,\alpha}}
\int_D {u^2(x)}
\;
x_d^{-\alpha}dx\,,
\end{equation}
where
\begin{equation}
  \label{eq:ns}
\kappa_{d,\alpha}=\frac{\pi^{\frac{d-1}{2}}
  \Gamma(\frac{1+\alpha}{2})}{\Gamma(\frac{\alpha+d}{2})}
\frac{B\left(\frac{1+\alpha}{2}, \frac{2-\alpha}{2}\right)
  -2^{\alpha}}{\alpha2^{\alpha}}\,,
\end{equation}
and {\rm (\ref{Hardyin})} fails to hold for some $u\in C_c(D)$ if $\kappa_{d,\alpha}$ is replaced by a
bigger constant.
\end{thm}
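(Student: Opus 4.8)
The plan is to derive (\ref{Hardyin}) from the \emph{ground-state representation} of the quadratic form on its left, and to prove optimality of $\kappa_{d,\alpha}$ via a minimizing sequence tailored to the critical weight $x_d^{-1}$.

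\emph{Step 1 (reduction to a pointwise identity).} Fix $w(x)=x_d^{\beta}$ with $\beta=(\alpha-1)/2$; for $u\in C_c(D)$ (we may assume the left side of (\ref{Hardyin}) is finite) set $v=u/w\in C_c(D)$. Starting from the algebraic identity
\[
(u(x)-u(y))^2=w(x)w(y)(v(x)-v(y))^2+v(x)^2w(x)(w(x)-w(y))+v(y)^2w(y)(w(y)-w(x)),
\]
integrating over $\{(x,y)\in D\times D:|x-y|>\varepsilon\}$, symmetrizing the last two summands, and letting $\varepsilon\to0^+$, one obtains
\[
\frac12\int_D\!\int_D\frac{(u(x)-u(y))^2}{|x-y|^{d+\alpha}}\,dx\,dy=\frac12\int_D\!\int_D\frac{w(x)w(y)(v(x)-v(y))^2}{|x-y|^{d+\alpha}}\,dx\,dy-\int_D v^2(x)\,w(x)\,Lw(x)\,dx,
\]
where $Lw(x)=\lim_{\varepsilon\to0^+}\int_{D,\ |x-y|>\varepsilon}(w(y)-w(x))|x-y|^{-d-\alpha}\,dy$; the limit exists and is finite since $\beta\in(-1,\alpha)$ governs the integrand near $x$ (second-order cancellation), near $\partial D$, and at infinity. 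As the first integral on the right is $\ge0$ and $v^2w^2=u^2$, inequality (\ref{Hardyin}) reduces to the pointwise identity $-Lw(x)=\kappa_{d,\alpha}\,x_d^{-\alpha}w(x)$.

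\emph{Step 2 (evaluating $Lw$).} By translation invariance of $L$ in $x_1,\dots,x_{d-1}$ and $\beta$-homogeneity, $-Lw(x)=c\,x_d^{\beta-\alpha}=c\,x_d^{-\alpha}w(x)$ for a constant $c=c(\beta)$, and we must identify $c$. Integrating out the boundary-parallel variables via $\int_{\Rdm}(|z'|^2+\delta^2)^{-(d+\alpha)/2}\,dz'=\omega_{d,\alpha}\,\delta^{-1-\alpha}$, where $\omega_{d,\alpha}=\pi^{(d-1)/2}\Gamma(\tfrac{1+\alpha}{2})/\Gamma(\tfrac{d+\alpha}{2})$ (a Beta-function integral — exactly the first factor of $\kappa_{d,\alpha}$), and then rescaling $y_d=x_dt$,
\[
-Lw(x)=\omega_{d,\alpha}\,x_d^{\beta-\alpha}\ \mathrm{P.V.}\!\int_0^\infty\frac{1-t^{\beta}}{|1-t|^{1+\alpha}}\,dt .
\]
The substitution $t\mapsto1/t$, together with $\alpha-1-\beta=\beta$, turns this principal value into the absolutely convergent $\int_0^1(1-t^{\beta})^2(1-t)^{-1-\alpha}\,dt$, which one evaluates in terms of Beta functions — with care near $\alpha=1$, where this quantity (and $\kappa_{d,1}$) vanishes — obtaining exactly $\big(B(\tfrac{1+\alpha}{2},\tfrac{2-\alpha}{2})-2^{\alpha}\big)/(\alpha2^{\alpha})$. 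Hence $c=\kappa_{d,\alpha}$, which proves (\ref{Hardyin}); the value $\beta=(\alpha-1)/2$ is the one maximizing $c(\beta)$ among admissible exponents (by the symmetry $c(\beta)=c(\alpha-1-\beta)$), so it gives the largest lower bound of this type. This explicit integral evaluation is the step I expect to be the main obstacle.

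\emph{Step 3 (sharpness).} Write $Q(u)$ for the quotient of the left side of (\ref{Hardyin}) by $\int_D u^2(x)\,x_d^{-\alpha}\,dx$. By Step 2, for $u\in C_c(D)$ and $v=u/w$,
\[
Q(u)=\kappa_{d,\alpha}+\frac{R(v)}{\int_D v^2(x)\,x_d^{-1}\,dx},\qquad R(v):=\frac12\int_D\!\int_D\frac{w(x)w(y)(v(x)-v(y))^2}{|x-y|^{d+\alpha}}\,dx\,dy ,
\]
so it suffices to exhibit $v_n\in C_c^\infty(D)$ with $R(v_n)/\int_D v_n^2x_d^{-1}\,dx\to0$; then $u_n:=wv_n\in C_c(D)$ and $Q(u_n)\to\kappa_{d,\alpha}$, ruling out any larger constant in (\ref{Hardyin}). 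The relevant feature is that $x_d^{-1}=w^2x_d^{-\alpha}$ is critical: $\int_0^\infty x_d^{-1}\,dx_d$ diverges logarithmically at both ends. Take $v_n(x)=g(n^{-1}\log x_d)\,h(R_n^{-1}x')$ with fixed bumps $g$ on $\R$, $h$ on $\Rdm$ (each $\equiv1$ near the origin, compactly supported) and $R_n=e^{2n}$, so that $R_n$ dominates the $x_d$-extent of $\supp v_n$ and $\int_D v_n^2x_d^{-1}\,dx\asymp n R_n^{d-1}$. Splitting $(v_n(x)-v_n(y))^2$ into the $g$-difference and the $h$-difference contributions: the former, after the change of variables $x_d=e^{t}$, becomes a \emph{translation-invariant} Dirichlet form on $\R$ with kernel $(2|\sinh\tfrac{t-s}{2}|)^{-1-\alpha}$ — the weight $(x_dy_d)^{\beta}$ cancels exactly — hence is $O(R_n^{d-1}/n)$ for the slowly varying profile $g(\cdot/n)$; the latter is controlled by rescaling $x'$ and the Beta integral of Step 2, and is $O(R_n^{d-1})$. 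Thus $R(v_n)/\int_D v_n^2x_d^{-1}\,dx=O(1/n)\to0$. The delicate point in this step is the bookkeeping for the $h$-difference term, where $R_n$ must be taken large enough that the ``edge cost'' of the $x'$-cutoff stays below the ($n$-times larger) weighted mass $\int_D v_n^2x_d^{-1}\,dx$.
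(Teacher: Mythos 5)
Your argument for the inequality itself is the same as the paper's: the algebraic identity in your Step~1 is exactly the paper's display (\ref{eq:dpf}), and Step~2 reproduces the computation of $\mathcal{L}w_{\bf p}$ for $w_{\bf p}(x)=x_d^{(\alpha-1)/2}$ (the lateral integration giving $\pi^{(d-1)/2}\Gamma(\frac{1+\alpha}{2})/\Gamma(\frac{d+\alpha}{2})\,\delta^{-1-\alpha}$, then the rescaling $y_d=x_d t$; your symmetrized integral $\int_0^1(1-t^{\bf p})^2(1-t)^{-1-\alpha}\,dt$ is precisely $-\gamma(\alpha,{\bf p})$ from (\ref{eq:dg})). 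The one piece you leave undone is exactly the content of the paper's Lemma~\ref{propapp}: evaluating that integral as $\bigl(B(\frac{1+\alpha}{2},\frac{2-\alpha}{2})-2^\alpha\bigr)/(\alpha 2^\alpha)$. You correctly flag it as the main obstacle; the paper handles it by integrating incomplete Beta functions $B_\kappa(a,b)$ by parts twice and applying the duplication formula, and without some such computation the identification $c=\kappa_{d,\alpha}$ is asserted rather than proved. (A small side point: symmetry $c(\beta)=c(\alpha-1-\beta)$ alone gives only a critical point at $\beta=(\alpha-1)/2$; the paper also uses convexity in $p$ of the integrand to get extremality. This is cosmetic, since optimality is proved independently.)

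Your sharpness argument is genuinely different from the paper's and is, in my view, the more transparent route. The paper takes $u_n=v_nx_d^{\bf p}$ with box cutoffs $v_n$ equal to $1$ on $[-n^2,n^2]^{d-1}\times[\frac1n,1]$ (or $[1,n]$ for $\alpha<1$), lower-bounds $\int u_n^2x_d^{-\alpha}\gtrsim n^{2(d-1)}\log n$, and in Lemma~\ref{meczacy2} bounds the residual term $\iint(v_n(x)-v_n(y))^2w(x)w(y)|x-y|^{-d-\alpha}$ by $cn^{2(d-1)}$ through a six-way decomposition $I_1+\dots+I_6$ resting on Lemma~\ref{techniczny}. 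You instead substitute $x_d=e^t$, under which the vertical part of the residual form becomes translation invariant with kernel $(2\sinh\frac{|t-s|}{2})^{-1-\alpha}$ (the weight $(x_dy_d)^{\bf p}$ cancels because ${\bf p}+1=\frac{1+\alpha}{2}$), so a slowly varying profile $g(\cdot/n)$ trivially costs $O(1/n)$ per unit of weighted mass. This buys a conceptually cleaner one-dimensional argument (for $d=1$ it is complete as sketched). What it does not buy is escape from the lateral bookkeeping for $d\ge2$: your $h$-difference term plays the role of the paper's $I_4$--$I_6$, and you only sketch it. I checked that with $R_n=e^{2n}$ strictly dominating the $x_d$-extent $e^{an}$ ($a<2$) the edge cost is indeed $o(R_n^{d-1})=o\bigl(\int v_n^2x_d^{-1}\bigr)$, using ${\bf p}-\alpha=-\frac{1+\alpha}{2}$; so the claim is correct, but a complete write-up would still need estimates comparable in length to the paper's. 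Net assessment: the strategy is sound and the theorem would follow once the Beta-function evaluation and the lateral estimates are written out.
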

Here $B$ is the Euler beta function, and 
$C_c(D)$ denotes the class of all the continuous functions
$u\,:\; \Rd\to \R$ with compact support in $D$.
On the right-hand side of (\ref{Hardyin}) we note the infinite measure
$x^{-\alpha}_ddx$, where
$x_d$ equals the distance of $x=(x_1,\ldots,x_d)\in D$ to the complement
of $D$.
Analogous Hardy inequalities, 
involving the distance to the complement of rather general domains,
and arbitrary positive exponents of integrability of functions $u$, were proved with
{\it rough} constants in \cite{Dijm2004} (see also \cite{MR1676324, ChenSong, Dyda2}).
Thus the focus in Theorem~\ref{hardy} is on optimality
of $\kappa_{d,\alpha}$.
We note that $\kappa_{d,1}=0$ and $\kappa_{d,\alpha}>0$ if $\alpha\neq
1$ (see the proof of Lemma~\ref{propapp}).

Theorem~\ref{hardy} may be viewed as an application of ideas of Ancona  \cite{Ancona}
and  Fitzsimmons \cite{Fitz}. Indeed, consider
the Dirichlet form $\mathcal{E}$, with domain $Dom(\mathcal{E})$, and
the generator $\mathcal{L}$, with domain $Dom(\mathcal{L})$,
of a symmetric Markov process 
(\cite{MR1303354}, \cite{MR1214375}, \cite{MR1873235}), and a function $w>0$, and a measure
$\nu\geq0$ on the state space. The following result was proved by
Fitzsimmons in \cite{Fitz}.
\begin{equation}
  \label{eq:tf}
  \mbox{If } \mathcal{L}w\leq -w\nu\quad \mbox{then}\quad \mathcal{E}(u,u)\geq
  \int u^2 \,d\nu\,,\quad u\in Dom(\mathcal{E})\,. 
\end{equation}
Thus, every superharmonic function $w$
(i.e. $w\geq 0$ such that $\mathcal{L}w\leq 0$) yields a Hardy-type
inequality with integral weight $\nu=-\mathcal{L}w/w$. For instance, in the proof of
Theorem~\ref{hardy} we will use $w(x)=x_d^{(\alpha-1)/2}$. 
Full details of (\ref{eq:tf}), and a converse result are given in \cite[Theorem 1.9]{Fitz}. 
Recall that
$$\mathcal{E}(u,v)=-(Lu,v)\,,\quad \mbox{ if } u\in Dom(\mathcal{L})\,,\;v\in
Dom(\mathcal{E})\,,
$$ 
(\cite{MR1303354}, \cite{MR1214375}). Therefore, equality holds 
in (\ref{eq:tf}) if $u=w\in Dom(\mathcal{L})$, see
\cite[(1.13.c)]{Fitz}, (\ref{eq:dpf}).
If $w\notin Dom(\mathcal{L})$, or $\mathcal{L}w$ does not belong to the
underlying $L^2$ space, then, as we shall see, the optimality of $\nu=-\mathcal{L}w/w$
critically depends on the choice of $w$.

According to \cite{BBC}, the Dirichlet form of the
censored $\alpha$-stable process in $D$ is
$$
\mathcal{C}(u,v)=\frac{1}{2}\mathcal{A}_{d,-\alpha}
\int_D \!\int_D
\frac{(u(x)-u(y))(v(x)-v(y))}{|x-y|^{d+\alpha}} \,dx\,dy\,,
$$ 
with core $C^\infty_c(D)$ (smooth functions in $C_D(D)$), 
the Lebesgue measure as the reference measure,
and the following {\it regional} fractional Laplacian on $D$ as the
generator (\cite[(3.12)]{BBC}, \cite{MR2238879,MR2214908}):
$$
\Delta^{\alpha/2}_D u(x)=\mathcal{A}_{d,-\alpha}\lim_{\varepsilon\to 0^+}
\int_{D\cap \{|y-x|>\varepsilon\}}\frac{u(y)-u(x)}{|x-y|^{d+\alpha}} \,dy\,.
$$ 
Here $\mathcal{A}_{d,-\alpha}=\Gamma((d+\alpha)/2)/
(2^{-\alpha}\pi^{d/2}|\Gamma(-\alpha/2)|)$, 
Clearly, (\ref{Hardyin}) is equivalent to 
\begin{equation}\label{Hardyinc}
\mathcal{C}(u,u)
\geq 
\mathcal{A}_{d,-\alpha}
{\kappa_{d,\alpha}}\,
\int_D {u^2(x)}
\;
x_d^{-\alpha}dx\,.
\end{equation}
Recall that the Dirichlet form of the stable process killed when leaving $D$ is
$$\mathcal{K}(u,v)=
\frac{1}{2}\mathcal{A}_{d,-\alpha}
\int_\Rd \!\int_\Rd
\frac{(u(x)-u(y))(v(x)-v(y))}{|x-y|^{d+\alpha}} \,dx\,dy\,,
$$
with core $C^\infty_c(D)$, 
the Lebesgue measure as the reference measure,
and the fractional Laplacian (on $\Rd$) as the generator,
$$
\Delta^{\alpha/2} u(x)=\mathcal{A}_{d,-\alpha}\lim_{\varepsilon\to 0^+}
\int_{\Rd\cap \{|y-x|>\varepsilon\}}\frac{u(y)-u(x)}{|x-y|^{d+\alpha}} \,dy
$$ 
(see, e.g., \cite{BBC}).
Decomposing $\Rd=D\cup D^c$, one obtains
$$
\mathcal{K}(u,u)=\mathcal{C}(u,u)+\int_Du^2(x)\kappa_D(x)dx\,,\quad
u\in C^\infty_c(D)\,,
$$
where (the density of the killing measure for $D$ is)
$$
\kappa_D(x)=\int_{D^c}\mathcal{A}_{d,-\alpha}
|x-y|^{-d-\alpha} \,dy=
\frac{1}{\alpha}\mathcal{A}_{d,-\alpha}
 \frac{\pi^\frac{d-1}{2}  \Gamma(\frac{1+\alpha}{2})}
  {\Gamma(\frac{\alpha+d}{2})}\,
\,x_d^{-\alpha}  \,, 
$$
see \cite[(2.3), (5.4)-(5.6)]{BBC}. It follows from (\ref{Hardyinc})
and Theorem \ref{hardy} that
\begin{eqnarray}
\nonumber
\mathcal{C}(u,u)
&\geq& 
\mathcal{A}_{d,-\alpha}
(\kappa_{d,\alpha}+
\frac{1}{\alpha}
 \frac{\pi^\frac{d-1}{2}  \Gamma(\frac{1+\alpha}{2})}
  {\Gamma(\frac{\alpha+d}{2})}
)
\int_D {u^2(x)}
x_d^{-\alpha}dx\\
&=&
\frac{\Gamma^2(\frac{1+\alpha}{2})}{\pi}
\int_D {u^2(x)}x_d^{-\alpha}dx\,,
  \label{eq:bckp}
\end{eqnarray}
for all $u\in C^\infty_c(D)$,
and the constant ${\Gamma^2(\frac{1+\alpha}{2})/\pi}$ 
is the best possible.

We like to note that in some respects, the censored stable process 
is a better analogue of the killed Brownian motion than the killed
stable process is (see \cite{ChenSong, BBC, MR2307059}, and
\cite{MR2114264, MR1991120}).
We suggest the former as a possible setup for studying Dirichlet boundary value problems for
non-local integro-differential operators and the corresponding stochastic processes
(\cite{MR1833696}, \cite{MR2365348}) on subdomains of $\Rd$
(\cite{BCI}), beyond the ``convolutional'' case of the whole of $\Rd$ (\cite{FLS,MR2373619}).
In this connection,  we refer to \cite{MR2238879,MR2214908,guan-2007} for
Green-type formulas for the censored process.

The reader interested in {fractional} Hardy inequalities may
consult \cite{KufnerPersson, HKP, MR1676324, ChenSong, Dijm2004,Dyda2}.
In particular, (\ref{Hardyin}) improves a part of
the (one-dimensional) result given in \cite[Theorem 2]{MR1676324}.
The fractional Hardy inequality on the whole of $\Rd$ is known as
Hardy-Rellich inequality, and the best
constant in this inequality was calculated in
\cite{MR0436854,MR1717839} (see also
\cite{MR2373619} for Pitt's inequality). 
As seen in \cite{Dijm2004}, the asymptotics of the
measure ${\dist(x, D^c)^{-\alpha}}dx$ agrees well with the homogeneity
of the kernel $|y-x|^{-d-\alpha}$ in (\ref{Hardyin}). Noteworthy, if $\alpha\leq 1$ and
$D$ is a {\it bounded} Lipschitz domain, 
then the best constant in (\ref{Hardyin}) is zero (\cite{Dijm2004}).

We like to make a few further remarks.
Theorem~\ref{hardy} and the
results obtained to date for Laplacian and fractional Laplacian suggest
possible strengthenings to weights with {additional} 
terms of lower-order boundary asymptotics (\cite{MR1655516,
  ghoussoub-2007, MR2373619}),
and
extensions to other 
specific or more general 
domains (\cite{MR1458330,MR2373619}).
To discuss the latter problem, we consider open $\Omega\subset D$, 
and its Hardy constant, $\kappa(\Omega)$, defined as the
largest number such that 
$$
\frac{1}{2} \int_\Omega \!\int_\Omega \frac{(u(x)-u(y))^2}{|x-y|^{d+\alpha}}
 \,dx\,dy 
\geq
  \kappa(\Omega) \int_\Omega \frac{u^2(x)}{\dist(x, \Omega^c)^\alpha}
  dx\,,\quad u\in C_c(\Omega)\,.
$$
Note that $\kappa(\Omega)>0$ if $\Omega$ is a bounded Lipschitz domain
and $\alpha>1$ (\cite{Dijm2004}).
Let $u\in C_c(\Omega)\subset C_c(D)$. We have
$$
\frac{1}{2} \int_\Omega \!\int_\Omega \frac{(u(x)-u(y))^2}{|x-y|^{d+\alpha}} \,dx\,dy 
 \leq
 \frac{1}{2}\int_D \!\int_D \frac{(u(x)-u(y))^2}{|x-y|^{d+\alpha}} \,dx\,dy\,,
$$
and
$$
\int_\Omega \frac{u^2(x)}{\dist(x, \Omega^c)^\alpha} dx
\geq
\int_D \frac{u^2(x)}{x_d^\alpha} dx\,, 
$$
thus $\kappa(\Omega)\leq \kappa_{d,\alpha}$.
We conjecture that
$\kappa(\Omega)=\kappa_{d,\alpha}$ for $\alpha\in (1,2)$ and {\it convex} $\Omega$, 
see \cite[Theorem 11]{MR1458330} for case of the Dirichlet of Laplacian.

Examining (\ref{eq:ns}) we see that $\kappa_{d,\alpha}\to \infty$ if
$\alpha\to 2$. This corresponds
to the fact that the only function $u\in C_c(\Omega)$
for which the left hand side of (\ref{Hardyin}) is finite for $\alpha=2$
is the zero function, see \cite{Brezis, Dijm2004}.
However, $\mathcal{A}_{d,-\alpha}\kappa_{d,\alpha}\to 1/4$ and
${\Gamma^2(\frac{1+\alpha}{2})/\pi}\to 1/4$ as
$\alpha\to 2$,  
an agreement with the classical Hardy inequality for 
Laplacian (\cite{MR1655516}) 
related to the fact that for $u\in
C^\infty_c(D)$, $\Delta_D^{\alpha/2}u\to \Delta u$ 
and $\mathcal{C}(u,u)\to -\int \Delta u(x) u(x)dx=\int
|\nabla u(x)|^2dx$ as $\alpha\to 2$ (the latter holds by Taylor's
expansion of order $2$, and a similar result is valid for $\mathcal{K}$). 
For the vast literature concerning optimal weights and constants in the
{classical} Hardy inequalities, and their applications we refer to \cite{MR0121449, MR1747888,
  MR1655516, FlorkiewiczKuchta, ghoussoub-2007, MR2079769, MR1920417}.

Our primary motivation to study Hardy
inequalities for non-local Dirichlet forms stems from the fact that 
the converse of (\ref{eq:tf}) stated in \cite[Theorem 1.9]{Fitz} 
allows for a construction of superharmonic functions, or barriers (\cite{Ancona}), when a Hardy
inequality is given. These functions may then be used to investigate transience
and boundary behavior of the underlying Markov processes
(\cite{Ancona}, \cite{BBC}, \cite{Dyda2}, \cite{ChenSong}).
In particular, we expect that the results of 
\cite{Dijm2004, Dyda2} may be used to obtain, for
the anisotropic stable (\cite{MR2320691,MR2137058})
censored processes, the
ruin probabilities generalizing \cite[Theorem 5.10]{BBC}, and to develop 
the boundary potential theory on Lipschitz domains (\cite{BBC,
  MR2214908, guan-2007}) in analogy
with those of the killed stable processes (\cite{MR2365478, MR1654115, MR1438304}). 
We also like to mention the connection of optimal Hardy inequalities
with critical Schr\"odinger perturbations and the so-called ground state representation \cite{FLS}.

Despite the general context mentioned above, the paper is essentially 
self-contained and purely analytic. In particular we directly derive
Fitzsimmons' ratio measure by a simple manipulation with quadratic
expressions, (\ref{eq:dpf}), not unrelated to the ground state
representation of \cite[(4.2)]{FLS}.
Theorem~\ref{hardy} is proved below in this section.
In Section~\ref{sec:a} we calculate auxiliary integrals.

In what follows, $|x|=(x_1^2+\dots+x_d^2)^{1/2}$ denotes the Euclidean norm of 
$x=(x_1,\ldots,x_d)\in \R^d$, and $B(x,r)$ denotes the Euclidean ball of radius $r>0$ centered at $x$.
For $d\geq 2$ we occasionally write $x=(x',x_d)$, where
$x'=(x_1,\ldots,x_{d-1})$, and we let $\|x'\|=\max_{k=1,\ldots,d-1}
|x_k|$, the supremum norm on $\R^{d-1}$.
\begin{proof}[Proof of Theorem~\ref{hardy}]
For $u, v\in C^\infty_c(D)$ we define (Dirichlet form) 
$$
\mathcal{E}(u,v)=\frac{1}{2}
\int_D \!\int_D
\frac{(u(x)-u(y))(v(x)-v(y))}{|x-y|^{d+\alpha}} \,dx\,dy\,,
$$ 
and (its generator)
$$
\mathcal{L} u(x)=\lim_{\varepsilon\to 0^+}
\int_{D\cap \{|y-x|>\varepsilon\}}\frac{u(y)-u(x)}{|x-y|^{d+\alpha}} \,dy\,,
$$ 
so that $\mathcal{A}_{d,-\alpha}\mathcal{E}=\mathcal{C}$,
$\mathcal{A}_{d,-\alpha}\mathcal{L}=\Delta^{\alpha/2}_D$, and
$\mathcal{E}(u,u)$ equals the left-hand side of (\ref{Hardyin}).

Let $p\in(-1,\alpha)$, $x=(x_1,\ldots,x_d)\in D$,
$$
w_p(x)=
  x_d^p\,. 
$$
By \cite[(5.4) and (5.5)]{BBC},
\begin{equation}\label{ullapl}
 \mathcal{L}w_p(x) = 
\;\gamma(\alpha,p)\, 
 \frac{\pi^\frac{d-1}{2}  \Gamma(\frac{1+\alpha}{2})}
  {\Gamma(\frac{\alpha+d}{2})}\, x_d^{-\alpha}\,w_p(x) \,,
\end{equation}
where the (absolutely convergent) integral 
\begin{equation}
  \label{eq:dg}
 \gamma(\alpha,p) = \int_0^1 
  \frac{(t^p-1)(1-t^{\alpha-p-1})}{(1-t)^{1+\alpha}}\, dt  \,,
\end{equation}
is negative if $p(\alpha-p-1)>0$.
Guided by the discussion in Section~\ref{sec:i} we let 
\begin{equation}
  \label{eq:wnn}
 \nu(x) = 
\frac{-\mathcal{L}w_p(x)}{w_p(x)} = 
   -\gamma(\alpha,p)
 \frac{\pi^\frac{d-1}{2}  \Gamma(\frac{1+\alpha}{2})}
  {\Gamma(\frac{\alpha+d}{2})}\,
\,x_d^{-\alpha}  \,. 
\end{equation}
Since, for each $t\in(0,1)$, the function 
$$
 p \mapsto  \frac{(t^p-1)(1-t^{\alpha-p-1})}{(1-t)^{1+\alpha}}
$$
is convex and symmetric with respect to $(\alpha-1)/2$,
therefore $p\mapsto \gamma(\alpha,p)$ has a non-positive minimum at
$p=(\alpha-1)/2$. 
By Lemma~\ref{propapp} below, (\ref{eq:wnn}), and
(\ref{eq:tf}), we obtain (\ref{Hardyin}) for $u\in
C^\infty_c(D)\subset Dom(\mathcal{C})$, with $\kappa_{d,\alpha}$ given by (\ref{eq:ns}).
The case of general $u\in C_c(D)$ is obtained by an approximation.

Since the setups of \cite{Fitz} and \cite{BBC} are rather
complex, we like to give the following elementary proof of (\ref{Hardyin}).
Let $w=w_{(\alpha-1)/2}$, $u\in C_c(D)$, $x,y\in D$. We
have
\begin{eqnarray} 
\left(u(x)-u(y)\right)^2&+&u^2(x)\frac{w(y)-w(x)}{w(x)}  
+u^2(y)\frac{w(x)-w(y)}{w(y)}\nonumber \\
&=&
{w(x)w(y)}
{\left[u(x)/w(x)-u(y)/w(y)\right]^2}
\geq 0\,.  \label{eq:dpf}
\end{eqnarray}
We integrate (\ref{eq:dpf}) against the symmetric measure
$1_{|y-x|>\varepsilon}|x-y|^{-d-\alpha}\,dx\,dy$, and we let
$\varepsilon \to 0^+$. According to the calculations above,
\begin{eqnarray*}
\frac{1}{2}\int\limits_D\int\limits_D \frac{(u(x)-u(y))^2} {|x-y|^{d+\alpha}}\,dx\,dy
&\geq& \int\limits_D u^2(x)
\lim\limits_{\varepsilon \to 0^+}
\!\!\!\!\!\!
\int\limits_{\{y\in D:\, |y-x|>\varepsilon\}}
\!\!\!\!\!\!\!\!\!
\frac{w(x)-w(y)}{|y-x|^{d+\alpha}}\,dy\,
\frac{dx}{w(x)} \\
&=& \,\kappa_{d,\alpha}\int\limits_D u^2(x)
\,x_d^{-\alpha}\,dx\,.
\end{eqnarray*}

To complete the proof we will verify the
optimality of $\kappa_{d,\alpha}$.
In what follows we denote ${\bf p}=\frac{\alpha-1}{2}$.
If $\alpha\geq 1$ then we consider functions $v_n$ such that
\begin{itemize}
\item[(i)] $v_n=1$ on $[-n^2,n^2]^{d-1}\times [\frac{1}{n}, 1]$,
\item[(ii)] $\supp v_n \subset [-n^2-1,n^2+1]^{d-1}\times
  [\frac{1}{2n}, 2]$,
\item[(iii)] $0\leq v_n \leq 1$,
 $|\nabla v_n(x)|\leq cx_d^{-1}$ and
 $|\nabla^2 v_n(x)|\leq cx_d^{-2}$ for $x\in D$.
\end{itemize}
If $\alpha< 1$ then we stipulate 
\begin{itemize}
\item[(i')] $v_n=1$ on $[-n^2,n^2]^{d-1}\times [1, n]$,
\item[(ii')] $\supp v_n \subset [-n^2-n,n^2+n]^{d-1}\times
  [\frac{1}{2}, 2n]$,
\item[(iii)] $0\leq v_n \leq 1$,
 $|\nabla v_n(x)|\leq cx_d^{-1}$ and
 $|\nabla^2 v_n(x)|\leq cx_d^{-2}$ for $x\in D$.
\end{itemize}
We define (for any $\alpha\in (0,2)$),
\begin{equation}
  \label{eq:duu}
u_n(x)=v_n(x)x_d^{\bf p}\,.  
\end{equation}
We have
$$
\int_D \frac{u_n(x)^2}{x_d^\alpha}\,dx \geq
\int\limits _{\{x:\,\|x'\|\leq n^2,\,\frac{1}{n}<x_d<1\}}
 \frac{x_d^{2{\bf p}}}{x_d^\alpha}\,dx = (2n^2)^{d-1}\log n.
$$
Thus, by Lemma~\ref{meczacy2} below, $\kappa_{d,\alpha}$ may not be replaced in (\ref{Hardyin})
by a bigger constant. 
\end{proof}

\section{Appendix}\label{sec:a}
\begin{lem}\label{propapp} 
For $0<\alpha<2$,
\begin{equation}
  \label{eq:www}
\gamma(\alpha,\frac{\alpha-1}{2}) = 
-\frac{1}{\alpha}\left[
B(\frac{1+\alpha}{2},\frac{2-\alpha}{2})2^{-\alpha}-1\right]\,.
\end{equation}
\end{lem}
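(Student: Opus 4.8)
The plan is to reduce (\ref{eq:www}) to a single beta-type integral and to evaluate that integral by analytic continuation in an auxiliary exponent. Put $p=\frac{\alpha-1}{2}$. Then $\alpha-p-1=p$, so the two factors in the numerator of (\ref{eq:dg}) become $(t^p-1)(1-t^{\alpha-p-1})=-(1-t^p)^2$, and
\[
\gamma\bigl(\alpha,\tfrac{\alpha-1}{2}\bigr)=-\int_0^1\frac{(1-t^p)^2}{(1-t)^{1+\alpha}}\,dt\,,
\]
an absolutely convergent integral because the numerator has a double zero at $t=1$ while $2p=\alpha-1>-1$ controls the behavior at $t=0$. The value $\alpha=1$ is trivial (both sides of (\ref{eq:www}) vanish), so I assume $\alpha\neq1$ from now on.

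Next I would insert a complex parameter $\beta$. For $\Re\beta<1$, expanding $(1-t^p)^2=1-2t^p+t^{2p}$ and integrating term by term against $(1-t)^{-\beta}\,dt$ by Euler's beta integral gives
\[
\int_0^1\frac{(1-t^p)^2}{(1-t)^{\beta}}\,dt=B(1,1-\beta)-2B(p+1,1-\beta)+B(2p+1,1-\beta)\,.
\]
The left-hand side is holomorphic on $\{\Re\beta<3\}$ (same double zero at $t=1$, and $2p>-1$ at $t=0$). The right-hand side is a priori only meromorphic, but a short residue computation based on $\Gamma(z+1)=z\Gamma(z)$ shows that the apparent poles at $\beta=1$ and $\beta=2$ cancel (the contributing weights add up to $1-2+1=0$ and to $-2p+2p=0$). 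Hence the displayed identity extends to all $\Re\beta<3$, and in particular may be evaluated at $\beta=1+\alpha\in(1,3)$.

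At $\beta=1+\alpha$ one has $1-\beta=-\alpha$ and $2p+1=\alpha$, so, using $\Gamma(z+1)=z\Gamma(z)$ repeatedly, $B(1,-\alpha)=\Gamma(-\alpha)/\Gamma(1-\alpha)=-1/\alpha$, $B(p+1,-\alpha)=\Gamma(\tfrac{1+\alpha}{2})\Gamma(-\alpha)/\Gamma(\tfrac{1-\alpha}{2})$, and $B(\alpha,-\alpha)=\Gamma(\alpha)\Gamma(-\alpha)/\Gamma(0)=0$. Therefore $\gamma(\alpha,\tfrac{\alpha-1}{2})=\tfrac1\alpha+2\,\Gamma(\tfrac{1+\alpha}{2})\Gamma(-\alpha)/\Gamma(\tfrac{1-\alpha}{2})$. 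To finish, I would rewrite the last term with Legendre's duplication formula $\Gamma(z)\Gamma(z+\tfrac12)=2^{1-2z}\sqrt{\pi}\,\Gamma(2z)$ at $z=-\alpha/2$, together with $\Gamma(-\tfrac{\alpha}{2})=-\tfrac{2}{\alpha}\Gamma(\tfrac{2-\alpha}{2})$ and $\Gamma(\tfrac32)=\tfrac{\sqrt\pi}{2}$; this identifies $2\,\Gamma(\tfrac{1+\alpha}{2})\Gamma(-\alpha)/\Gamma(\tfrac{1-\alpha}{2})$ with $-\tfrac{2^{-\alpha}}{\alpha}B(\tfrac{1+\alpha}{2},\tfrac{2-\alpha}{2})$, which is precisely (\ref{eq:www}).

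I expect the only delicate point to be the analytic-continuation step, i.e. confirming that the three beta terms combine into a function that is actually holomorphic across $\beta=1$ and $\beta=2$, so that substituting $\beta=1+\alpha$ is justified; after that the computation is routine bookkeeping with the Gamma function, and the case $\alpha=1$ has already been set aside.
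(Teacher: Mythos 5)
Your proof is correct, but it regularizes the divergent beta integrals by a genuinely different device than the paper. The paper keeps a general exponent $p$, expands the integrand of (\ref{eq:dg}) into four terms, truncates the integrals at $\kappa<1$, and applies the integration-by-parts recursion $B_\kappa(a,b)=\frac{a+b}{b}B_\kappa(a,b+1)-\frac1b\kappa^a(1-\kappa)^b$ twice to raise the second argument from $-\alpha$ to $2-\alpha>0$; after checking that the boundary terms vanish to first order at $\kappa=1$ it obtains the closed form (\ref{eq:wgap}) for all $p$ and only then sets $p=\frac{\alpha-1}{2}$. You instead specialize at the outset, which collapses the numerator to $-(1-t^p)^2$ and leaves only three beta terms, and you regularize by analytic continuation in the exponent $\beta$ of $(1-t)^{-\beta}$. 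Your residue cancellations are right: at $\beta=1$ the weights give $1-2+1=0$, and at $\beta=2$ the residue of $B(a,1-\beta)$ is $a-1$, so the three terms contribute $0-2p+2p=0$ (the first term $B(1,1-\beta)=1/(1-\beta)$ indeed has no pole at $\beta=2$); this needs $p\neq0$ and $2p\neq0$, which your exclusion of $\alpha=1$ guarantees. The evaluation at $\beta=1+\alpha$ and the final bookkeeping also check out: by duplication at $z=-\alpha/2$ one gets $2\,\Gamma(\tfrac{1+\alpha}{2})\Gamma(-\alpha)/\Gamma(\tfrac{1-\alpha}{2})=2^{-\alpha}\Gamma(-\tfrac{\alpha}{2})\Gamma(\tfrac{1+\alpha}{2})/\sqrt{\pi}=-\tfrac{2^{-\alpha}}{\alpha}B(\tfrac{1+\alpha}{2},\tfrac{2-\alpha}{2})$, which combined with $B(1,-\alpha)=-1/\alpha$ and $B(\alpha,-\alpha)=0$ yields exactly (\ref{eq:www}). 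In short, both arguments end at the same gamma-function identities; yours is shorter and more transparent at the special value $p=\frac{\alpha-1}{2}$, while the paper's is entirely real-variable (no identity theorem needed) and produces the general-$p$ formula (\ref{eq:wgap}) as a by-product.
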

\begin{proof}
Since $$
 \gamma(\alpha,p) = \int_0^1 
  \frac{t^p-t^{\alpha-1}-1+t^{\alpha-p-1}}{(1-t)^{1+\alpha}}\, dt\,,
$$
we are led to considering
$$
B_\kappa(a,b)=\int_0^\kappa t^{a-1}(1-t)^{b-1}\,dt\,.
$$
Here and below $a>0$, $b>-2$, and $0\leq \kappa<1$. We will also
assume that $b\neq 0, 1$.

Using $t^{a-1}=t^{a-1}(1-t)+t^a$, and integration by
parts, we get
$$
B_\kappa(a,b)=\frac{a+b}{b}B_\kappa(a,b+1)-\frac{1}{b}\kappa^a(1-\kappa)^b\,,
$$
hence
$$
B_\kappa(a,b)=\frac{a+b}{b}
\left(\frac{a+b+1}{b+1}B_\kappa(a,b+2)-\frac{1}{b+1}\kappa^a(1-\kappa)^{b+1}\right)
-\frac{1}{b}\kappa^a(1-\kappa)^b\,.
$$
Clearly, $\gamma(\alpha,p)=\lim_{\kappa\to 1^-}\left[
B_\kappa(p+1,-\alpha)-B_\kappa(\alpha,-\alpha)-B_\kappa(1,-\alpha)+B_\kappa(\alpha-p,-\alpha)
\right].$
For $\alpha\neq 1$ we have, 
\begin{eqnarray*}
&&
B_\kappa(p+1,-\alpha)-B_\kappa(\alpha,-\alpha)-B_\kappa(1,-\alpha)+B_\kappa(\alpha-p,-\alpha)=
\frac{1}{\alpha(\alpha-1)}\times\\
&&
\left\{
(p+1-\alpha)(p+1-\alpha+1)B_\kappa(p+1,2-\alpha)
-(\alpha-\alpha)(\alpha-\alpha+1)B_\kappa(\alpha,2-\alpha)\right.\\
&&\left.-(1-\alpha)(1-\alpha+1)B_\kappa(1,2-\alpha)
+(\alpha-p-\alpha)(\alpha-p-\alpha+1)B_\kappa(\alpha-p,2-\alpha)\right\}\\
&&
+\frac{(1-\kappa)^{1-\alpha}}{\alpha(\alpha-1)}
\left[
-(p+1-\alpha)\kappa^{p+1}
+(\alpha-\alpha)\kappa^\alpha
+(1-\alpha)\kappa^1
-(\alpha-p-\alpha)\kappa^{\alpha-p}\right]\\
&&
+\frac{(1-\kappa)^{-\alpha}}{-\alpha}
\left[
-\kappa^{p+1}
+\kappa^\alpha
+\kappa^1
-\kappa^{\alpha-p}\right]\,.
\end{eqnarray*}
All expressions in the
square brackets, and their derivative, vanish at $\kappa=1$.
Thus, they do not contribute to the limit as $\kappa\to 1$.
For $\alpha\neq 1$ we get 
\begin{eqnarray}
\gamma(\alpha,p)&=&\frac{1}{\alpha(\alpha-1)}\left\{
(p+1-\alpha)(p+2-\alpha)B(p+1,2-\alpha)\right.\nonumber\\
&&\left.-(1-\alpha)(2-\alpha)B(1,2-\alpha)
+p(p-1)B(\alpha-p,2-\alpha)\right\}\,.\label{eq:wgap}
\end{eqnarray}
By the duplication formula
$\Gamma(2z)=(2\pi)^{-1/2}\, 2^{2z-1/2}\, \Gamma(z)\, \Gamma(z+1/2)$
with $2z=2-\alpha$, for $p=(\alpha-1)/2$, this equals
\begin{eqnarray*}
&&\frac{1}{\alpha}\left[-\frac{3-\alpha}{2}B(\frac{\alpha+1}{2},2-\alpha)+1\right]=
\frac{1}{\alpha}\left[-\Gamma(\frac{\alpha+1}{2})\Gamma(2-\alpha)/\Gamma(\frac{3-\alpha}{2})+1\right]\\
&&=
\frac{1}{\alpha}\left[-\Gamma(\frac{\alpha+1}{2})\Gamma(\frac{2-\alpha}{2})/\Gamma(\frac{1}{2})2^{1-\alpha}+1\right]
=
-\frac{1}{\alpha}\left[B(\frac{\alpha+1}{2},\frac{2-\alpha}{2})2^{-\alpha}-1\right]\,.
\end{eqnarray*}
We thus proved (\ref{eq:www}) for $\alpha\neq 1$. 
The case of $\alpha=1$ is trivial. In fact, $\gamma(1, 0)=0$.
\end{proof}

\begin{lem}\label{techniczny}
Let $-1<r<\alpha<2$ and $\alpha>0$.
There exists a constant $c$
such that
$$
 \int_{D\setminus B(x,a)} \frac{y_d^r}{|x-y|^{d+\alpha}}\,dy
 \leq ca^{-\alpha} (a \vee x_d)^r
$$
for every $a>0$ and $x\in D$.
\end{lem}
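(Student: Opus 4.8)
The plan is to exploit the homogeneity of the integral. Denote by $F(x,a)$ the left-hand side of the asserted inequality. The substitution $y=\lambda z$ gives $F(x,a)=\lambda^{r-\alpha}F(x/\lambda,a/\lambda)$ for every $\lambda>0$, and since $D$ is invariant under translations in the first $d-1$ coordinates, $F((x',x_d),a)$ does not depend on $x'$. Thus it suffices to treat two cases. If $x_d\le a$, take $\lambda=a$; then $F(x,a)=a^{r-\alpha}F((0,s),1)$ with $s=x_d/a\in(0,1]$, and $a^{r-\alpha}=a^{-\alpha}(a\vee x_d)^r$, so it is enough to bound $F((0,s),1)$ uniformly in $s\in(0,1]$. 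If $x_d>a$, take $\lambda=x_d$; then $F(x,a)=x_d^{r-\alpha}F((0,1),b)$ with $b=a/x_d\in(0,1)$, and $x_d^{r-\alpha}b^{-\alpha}=x_d^{r}a^{-\alpha}=(a\vee x_d)^r a^{-\alpha}$, so it is enough to show $F((0,1),b)\le M\,b^{-\alpha}$ for $0<b<1$ and some constant $M$.

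For the first case, fix $s\in(0,1]$ and split $D\setminus B((0,s),1)$ into $\{y_d>2\}$ and $\{0<y_d\le2\}$. On $\{y_d>2\}$ one has $1/2\le(y_d-s)/y_d\le1$, so $|(0,s)-y|\ge y_d-s\ge1$ (hence the removed ball is disjoint from this set) and $y_d^{r}\le2^{|r|}(y_d-s)^{r}$; integrating first in $y'\in\Rdm$ produces a dimensional constant times $(y_d-s)^{-1-\alpha}$ (the $y'$-integral converges since $d+\alpha>d-1$), so that this part is controlled by $\int_2^\infty(y_d-s)^{r-1-\alpha}\,dy_d$, which is finite and bounded uniformly in $s$ because $r<\alpha$. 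On $\{0<y_d\le2\}$, performing the $y'$-integration over $\{y'\in\Rdm:\,|y'|^2+(y_d-s)^2\ge1\}$ and bounding the integrand there by $\min(1,|y'|^{-d-\alpha})$ gives an inner integral bounded by a constant depending only on $d$ and $\alpha$, uniformly in $s$ and $y_d$; the surviving integral $\int_0^2 y_d^{r}\,dy_d$ is finite because $r>-1$. This bounds $\sup_{0<s\le1}F((0,s),1)$.

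For the second case, split $D\setminus B((0,1),b)$ into the part where $|(0,1)-y|\le1/2$ and the part where $|(0,1)-y|>1/2$. On the first part $y_d\in[1/2,3/2]$, so $y_d^{r}$ is bounded by a constant, and the contribution is at most a constant times $\int_{\{|(0,1)-y|\ge b\}}|(0,1)-y|^{-d-\alpha}\,dy$, which equals a dimensional constant times $b^{-\alpha}$. On the second part, dropping the removed ball only increases the integral, and $\int_{\{|(0,1)-y|>1/2\}\cap D}y_d^{r}|(0,1)-y|^{-d-\alpha}\,dy$ is a finite constant, obtained by an analogous splitting (into $\{y_d>2\}$ and $\{0<y_d\le2\}$) and the same elementary estimates; since $b^{-\alpha}\ge1$, this contribution too is at most a constant times $b^{-\alpha}$. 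Adding the two parts gives $F((0,1),b)\le M\,b^{-\alpha}$, and the lemma follows with $c=\bigl(\sup_{0<s\le1}F((0,s),1)\bigr)\vee M$.

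I expect the only genuine difficulty to be the behaviour near the boundary $\{y_d=0\}$ when $r<0$, where $y_d^{r}$ is unbounded. The key point is that on the regions where $y_d$ is small the constraint $|x-y|\ge1$ (resp.\ $|x-y|\ge1/2$) forces $|y'|$ to stay bounded away from $0$ precisely there, so the $y'$-integral remains uniformly bounded and only the integrable singularity $y_d^{r}$, integrable since $r>-1$, survives. The case $d=1$ is degenerate and easier: the $y'$-integration is absent and all the estimates reduce to elementary one-dimensional computations.
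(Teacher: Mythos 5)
Your proof is correct. You have verified the scaling identity $F(x,a)=\lambda^{r-\alpha}F(x/\lambda,a/\lambda)$ accurately, the two normalizations produce exactly the right-hand side $a^{-\alpha}(a\vee x_d)^r$ in each case, and the remaining uniform bounds on $F((0,s),1)$ for $s\in(0,1]$ and on $b^{\alpha}F((0,1),b)$ for $b\in(0,1)$ are established by legitimate elementary estimates: the exponent count $r-1-\alpha<-1$ (from $r<\alpha$) handles $\{y_d>2\}$, the condition $r>-1$ handles the boundary strip, and the condition $\alpha>-1$ (trivially true) makes the $y'$-integral of $(|y'|^2+h^2)^{-(d+\alpha)/2}$ converge to $c_dh^{-1-\alpha}$. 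The route is genuinely different from the paper's. The paper works directly at the original scale: when $a\geq x_d/2$ it decomposes $D\setminus B(x,a)$ into dyadic annuli $B(x,2^ka,2^{k+1}a)$, bounds $\int_{D\cap B(x,R)}y_d^r\,dy$ by $cR^{d+r}$ (using $r>-1$), and sums the resulting geometric-type series $\sum_k(2^ka)^{r-\alpha}$ (using $r<\alpha$); when $a<x_d/2$ it treats the intermediate annulus $B(x,a,x_d)$, where $y_d\asymp x_d$, separately and reuses the first case for the far range. Your approach replaces the dyadic summation by a one-time use of homogeneity plus translation invariance, which compresses the problem to two compact one-parameter families and lets Fubini (integrating out $y'$ first) do the rest; the paper's approach avoids any normalization and is closer in spirit to the annulus estimates it needs again in Lemma 3. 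Both arguments use the hypotheses $-1<r<\alpha$ at exactly the same two points, so neither is more general; yours is arguably cleaner to check for uniformity in $x$ and $a$, while the paper's is shorter to state.
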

\begin{proof}
Let $B(x,s,t)=B(x,t)\setminus B(x,s)$.
If $a\geq x_d/2$ then
\begin{eqnarray*}
\int_{D\setminus B(x,a)}
 \frac{y_d^r}{|x-y|^{d+\alpha}}\,dy &\leq&
 c \sum_{k=0}^\infty \int_{D\cap B(x,2^ka, 2^{k+1}a)}
 \frac{y_d^r}{(2^k a)^{d+\alpha}}\,dy \\&\leq&
c' \sum_{k=0}^\infty (2^ka)^{r-\alpha} = c''a^{r-\alpha}.
\end{eqnarray*}
If $a<x_d/2$ then
$$
\int_{D\cap B(x,a,x_d)}
 \frac{y_d^r}{|x-y|^{d+\alpha}}\,dy \leq
 cx_d^r\, a^{-\alpha}\,,
$$
and, by first part of the proof,
$$
\int_{D\setminus B(x,x_d)}
 \frac{y_d^r}{|x-y|^{d+\alpha}}\,dy \leq
c x_d^{r-\alpha}.
$$
This ends the proof.
\end{proof}
Recall that ${\bf p}=\frac{\alpha-1}{2}$, and $u_n$ is defined by 
(\ref{eq:duu}).
\begin{lem}\label{meczacy2}
There exists a constant $c$ 
independent of $n$,
such that
$$
 \int_D\!\int_D  \frac{(u_n(x)-u_n(y))^2}{|x-y|^{d+\alpha}}\,dy\,dx \leq
 cn^{2(d-1)} + 2\kappa_{d,\alpha} \int\limits_D u_n^2(x) \,x_d^{-\alpha}\,dx.
$$
\end{lem}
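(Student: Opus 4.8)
The plan is to convert the left-hand side into an exact expression by means of the quadratic identity (\ref{eq:dpf}), and then to estimate the resulting ``defect''. Writing $w=w_{(\alpha-1)/2}$, so that $u_n/w=v_n$, and recalling from (\ref{ullapl}), (\ref{eq:wnn}) and Lemma~\ref{propapp} that $\mathcal{L}w(x)/w(x)=-\kappa_{d,\alpha}\,x_d^{-\alpha}$ (the very computation already performed in the proof of Theorem~\ref{hardy}), I would integrate (\ref{eq:dpf}) with $u=u_n$ against $\mathbf{1}_{\{|y-x|>\varepsilon\}}|x-y|^{-d-\alpha}\,dx\,dy$ over $D\times D$, combine the two middle terms using the symmetry of the measure, and let $\varepsilon\to0^+$ exactly as there (the passage to the limit in the middle term being justified by the locally uniform convergence of the truncated integrals to the continuous function $\mathcal{L}w$ and by $u_n$ having compact support in $D$). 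This gives
$$
\int_D\!\int_D\frac{(u_n(x)-u_n(y))^2}{|x-y|^{d+\alpha}}\,dy\,dx
=2\kappa_{d,\alpha}\int_D u_n^2(x)\,x_d^{-\alpha}\,dx+R_n,\qquad
R_n:=\int_D\!\int_D\frac{w(x)w(y)\big(v_n(x)-v_n(y)\big)^2}{|x-y|^{d+\alpha}}\,dy\,dx\ge 0,
$$
so the lemma reduces to the single estimate $R_n\le c\,n^{2(d-1)}$.

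To bound $R_n$ I would split the domain into the near region $\{|x-y|<\tfrac12(x_d\wedge y_d)\}$ and its complement. On the near region $x_d$ and $y_d$ are comparable and $|x-y|<x_d/2$, so $w(x)w(y)\le c\,x_d^{\alpha-1}$; by the mean value theorem and the gradient bound in (iii), $(v_n(x)-v_n(y))^2\le c\,x_d^{-2}|x-y|^2$, and for fixed $x$ this vanishes unless $B(x,x_d/2)$ meets $S_n:=\{\nabla v_n\neq 0\}$. Since $2-\alpha>0$, the inner $y$-integral is then $\le c\,x_d^{\alpha-1}x_d^{-2}x_d^{2-\alpha}=c\,x_d^{-1}$, so the near part of $R_n$ is at most $c\int_{\{x\in D:\,\dist(x,S_n)<x_d/2\}}x_d^{-1}\,dx$. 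Since $S_n$ is contained in a bounded neighbourhood of the boundary of $\{v_n=1\}$ --- a ``lateral'' shell together with a ``bottom'' and a ``top'' horizontal layer --- a direct computation with (i)--(iii) (resp.\ (i')--(iii)) shows the lateral shell contributes only $O(n^{2(d-2)}\log n)$ (resp.\ $O(n^{2d-3}\log n)$) while the bottom and top layers, over which $x_d$ stays in a ratio-bounded interval, contribute $O(n^{2(d-1)})$ each; hence the near part is $O(n^{2(d-1)})$.

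On the complement one has $|x-y|\ge x_d/4$, so Lemma~\ref{techniczny} (with $r={\bf p}=(\alpha-1)/2\in(-1,\alpha)$) gives $\int_{\{y\in D:\,|x-y|\ge x_d/4\}}y_d^{\bf p}|x-y|^{-d-\alpha}\,dy\le c\,x_d^{{\bf p}-\alpha}$. Setting $K_n=\{v_n=1\}$, $L_n=\supp v_n$, $M_n=L_n\setminus K_n$, I would use the pointwise bound
$$
(v_n(x)-v_n(y))^2\le\mathbf{1}_{M_n}(x)+\mathbf{1}_{M_n}(y)+\mathbf{1}_{K_n}(x)\mathbf{1}_{D\setminus L_n}(y)+\mathbf{1}_{K_n}(y)\mathbf{1}_{D\setminus L_n}(x),
$$
which holds because $v_n(x)\neq v_n(y)$ forces one of the two points into $L_n$ and the other out of $K_n$. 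The two $M_n$-terms are then $\le c\int_{M_n}x_d^{\bf p}\,x_d^{{\bf p}-\alpha}\,dx=c\int_{M_n}x_d^{-1}\,dx=O(n^{2(d-1)})$ by the same shell computation as above. The remaining two terms equal $\int_{K_n}w(x)\big(\int_{D\setminus L_n}w(y)|x-y|^{-d-\alpha}\,dy\big)\,dx$; here one must \emph{not} insert the bound $x_d^{{\bf p}-\alpha}$ for the inner integral, since that would produce $\int_{K_n}x_d^{-1}\,dx\asymp n^{2(d-1)}\log n$. Instead I would split $D\setminus L_n$ into its ``below'' slab, its ``above'' region and its ``lateral'' region, note that the first two lie at height-distance $\gtrsim x_d$ from $K_n$ (with possible thickness $\asymp 1/n$, resp.\ $\asymp1$) and the third at horizontal distance $\gtrsim1$, resp.\ $\gtrsim n$, and estimate each inner integral accordingly; the extra factor $x_d$ in the height-distance is exactly what converts $\int_{K_n}x_d^{-1}\,dx$ into an $O(n^{2(d-1)})$ quantity. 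Collecting all the pieces yields $R_n\le c\,n^{2(d-1)}$.

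The main obstacle is precisely this last point: obtaining the power $n^{2(d-1)}$ rather than $n^{2(d-1)}\log n$. Crude estimates --- $(v_n(x)-v_n(y))^2\le 2v_n(x)^2+2v_n(y)^2$, or one blanket application of Lemma~\ref{techniczny} to the interaction between $\{v_n=1\}$ and $\{v_n=0\}$ --- all produce the spurious factor $\int_{K_n}x_d^{-1}\,dx\asymp n^{2(d-1)}\log n$, so one is forced to exploit the genuine geometric separation between $\{v_n=1\}$ and $\{v_n=0\}$ and to integrate the kernel only over the thin boundary slabs and the far lateral region. Everything else is routine bookkeeping with Lemma~\ref{techniczny} and the two derivative bounds in (iii), organised according to the position of $x$ and $y$ relative to $\{v_n=1\}$, $M_n$ and $\{v_n=0\}$.
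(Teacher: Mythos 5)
Your proposal is correct, and its first step --- integrating the identity (\ref{eq:dpf}) to get the exact decomposition $\int\!\!\int(u_n(x)-u_n(y))^2|x-y|^{-d-\alpha}=2\kappa_{d,\alpha}\int u_n^2x_d^{-\alpha}+R_n$ with $R_n=\int\!\!\int w(x)w(y)(v_n(x)-v_n(y))^2|x-y|^{-d-\alpha}\geq0$ --- is exactly what the paper does. The estimation of $R_n$ is organised differently, though the ingredients (Lemma~\ref{techniczny} and the derivative bounds in (iii)) are the same. The paper splits $R_n$ into six integrals $I_1,\dots,I_6$ keyed to fixed scales of $|x-y|$ (namely $\frac{1}{4n}$ and $\frac14$ when $\alpha\geq1$) and to the position of $x$ in a lateral shell $P_n$, a bottom slab $R_n$, and a middle region $L_n$; your delicate term --- the interaction of $\{v_n=1\}$ with the thin slab below the support, where a blanket use of Lemma~\ref{techniczny} would cost a spurious $\log n$ --- is precisely the paper's $I_6$, which it handles by the same device you describe: integrating the kernel only over the layer $\{0<y_d<\frac1n\}$ and gaining the factor $x_d^{-\alpha-1}/n$ instead of $x_d^{-\alpha}$. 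Your scale-invariant near/far split at $\frac12(x_d\wedge y_d)$, together with the indicator bound $(v_n(x)-v_n(y))^2\leq\mathbf{1}_{M_n}(x)+\mathbf{1}_{M_n}(y)+\mathbf{1}_{K_n}(x)\mathbf{1}_{D\setminus L_n}(y)+\mathbf{1}_{K_n}(y)\mathbf{1}_{D\setminus L_n}(x)$, is arguably a cleaner bookkeeping device that makes the source of each $n^{2(d-1)}$ (and of the potential $\log n$) more transparent, and it treats the cases $\alpha\geq1$ and $\alpha<1$ uniformly, whereas the paper writes out two parallel six-fold decompositions; what the paper's version buys in exchange is that each $I_k$ is a single application of Lemma~\ref{techniczny} or of an explicit one-dimensional integral, with no geometric case analysis inside a region.
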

\begin{proof}
To simplify the notation we let $K_n = \supp u_n$ and $u=u_n$, $v=v_n$.
By (\ref{eq:dpf}) and (\ref{ullapl}) we have
\begin{eqnarray*}
\int\limits_D\int\limits_D \frac{(u(x)-u(y))^2} {|x-y|^{d+\alpha}}\,dx\,dy
&=& 2\kappa_{d,\alpha} \int\limits_D u^2(x) \,x_d^{-\alpha}\,dx \\
&& + \int\limits_D\int\limits_D \frac{(v(x)-v(y))^2} {|x-y|^{d+\alpha}}\,w(x)w(y)\,dx\,dy.
\end{eqnarray*}
We will estimate the latter (double) integral by $cn^{2(d-1)}$, by
splitting it into the sum of the following six integrals $I_1+\ldots+I_6$.

We will first consider the case of $\alpha\geq 1$.

If $x\in K_n$ and $y\in B(x,\frac{1}{4n})$, 
then $|v(x)-v(y)|\leq c|x-y| x_d^{-1}$, as follows from (ii)
and (iii). We thus have
\begin{eqnarray*}
I_1&=& \int_D \int_{B(x,\frac{1}{4n})}
\frac{(v(x)-v(y))^2}{|x-y|^{d+\alpha}}\,w(x)w(y)\,dy\,dx\\
 &\leq&
2\int_{K_n} \int_{B(x,\frac{1}{4n})}
\frac{(v(x)-v(y))^2}{|x-y|^{d+\alpha}}\,w(x)w(y)\,dy\,dx\\
 &\leq&
c \int_{K_n} \int_{B(x,\frac{1}{4n})} 
  \frac{x_d^{2{\bf p}-2}}{|x-y|^{d+\alpha-2}} \,dy\,dx \\
&\leq& c' n^{2(d-1)}.
\end{eqnarray*}
A similar argument gives
\[
I_2=
\int_{\{x:\,x_d\geq \frac{1}{2}\}} \int_{B(x,\frac{1}{4})}
\frac{(v(x)-v(y))^2}{|x-y|^{d+\alpha}}\,w(x)w(y)\,dy\,dx \\
\leq
cn^{2(d-1)}.
\]
We then have by Lemma~\ref{techniczny} for $a=1/4$ and $r={\bf p}$
\begin{eqnarray*}
I_3&=&
\int_D \int_{D\setminus B(x,\frac{1}{4})}
\frac{(v(x)-v(y))^2}{|x-y|^{d+\alpha}}\,dy\,dx\\
 &\leq&
\int_{K_n} \int_{D\setminus B(x,\frac{1}{4})}
\frac{c}{|x-y|^{d+\alpha}}\,w(x)w(y)\,dy\,dx \\ &\leq&
c'n^{2(d-1)}.
\end{eqnarray*}
If $d\geq 2$ then we consider $P_n = \{x\in\R^d : \,\|x'\|\geq n^2-1\,,\;
0<x_d<\frac{1}{2} \}$ and
$P_n^0 = P_n \cap \{x\in\R^d : \,\|x'\| < n^2+\frac{5}{4} \}$.
We obtain
\begin{eqnarray*}
I_4&=&
\int_{P_n} \int_{D\cap B(x,\frac{1}{4n},\frac{1}{4})}
\frac{(v(x)-v(y))^2}{|x-y|^{d+\alpha}}\,w(x)w(y)\,dy\,dx \\
&\leq&
 \int_{P_n^0}  \int_{D\setminus B(x,\frac{1}{4n})}
\frac{c}{|x-y|^{d+\alpha}}\,dy\,dx \\
&\leq& c' |P_n^0| n^\alpha
\leq c''n^{2(d-1)}.
\end{eqnarray*}
We let $R_n = \{x\in\R^d : \,\|x'\| < n^2-1\,,\;0<x_d<\frac{2}{n} \}$ 
if $d\geq 2$, and we let
$R_n = \{x\in\R : \,0<x<\frac{2}{n} \}$ if $d=1$. We have
\begin{eqnarray*}
I_5&=&\int_{R_n} \int_{D \cap B(x,\frac{1}{4n},\frac{1}{4})}
\frac{(v(x)-v(y))^2}{|x-y|^{d+\alpha}}\,w(x)w(y)\,dy\,dx \\
&\leq&
 \int_{R_n} \int_{D\setminus B(x,\frac{1}{4n})}
\frac{cy_d^{{\bf p}} (\frac{1}{n})^{\bf p}}{|x-y|^{d+\alpha}}\,dy\,dx 
\leq c'n^{2(d-1)}.
\end{eqnarray*}
In the last inequality above we have used Lemma~\ref{techniczny}
with $a=\frac{1}{4n}$ and $r={\bf p}$.

We define $L_n=\{x\in \R : \frac{2}{n} \leq x < \frac{1}{2} \}$ in
dimension $d=1$, and  for $d\geq 2$ we let
$L_n=\{x\in \R^d : \|x'\| < n^2-1\,,\;\frac{2}{n} \leq x_d <\frac{1}{2} \}$.
We have
\begin{eqnarray*}
I_6 &=&
\int_{L_n} \int_{D\cap B(x,\frac{1}{4n},\frac{1}{4})}
\frac{(v(x)-v(y))^2}{|x-y|^{d+\alpha}}\,w(x)w(y)\,dy\,dx\\
&\leq&
\int_{L_n} \int_{\{y:0<y_d<\frac{1}{n}\}}
\frac{w(x)w(y)}{|x-y|^{d+\alpha}}\,dy\,dx
\end{eqnarray*}
For $d\geq 2$ and $x\in L_n$ we have
\begin{eqnarray*}
&& \int\limits_{\{y: 0<y_d<\frac{1}{n} \}}
  \frac{dy}{|x-y|^{d+\alpha}} 
 \leq
c  \int\limits_{\{y: 0<y_d<\frac{1}{n} \}}
  \frac{dy}{(|x'-y'|^2 + x_d^2)^{(d+\alpha)/2}} \\
&=&
\frac{c}{n} 
  \left( 
 \int_{\{y'\in\R^{d-1}: |x'-y'|<x_d\}} +
 \int_{\{y'\in\R^{d-1}: |x'-y'|\geq x_d\}}
  \right)
  \frac{dy'}{(|x'-y'|^2 + x_d^2)^{(d+\alpha)/2}} \\
&\leq& c' \frac{x_d^{-\alpha-1}}{n}\,,
\end{eqnarray*}
thus
$$
I_6 \leq
 c \int_{L_n} 
\left(\frac{x_d}{n}\right)^{\bf p}
\frac{x_d^{-\alpha-1}}{n} \,dx
\leq c'n^{2(d-1)}.
$$
The case of $d=1$ is left to the reader. 

We now consider the case of $\alpha< 1$.
We have
\begin{eqnarray*}
I&=&\int\limits_D\int\limits_D \frac{(v(x)-v(y))^2} {|x-y|^{d+\alpha}}\,w(x)w(y)\,dx\,dy\\
&\leq&
\int_D \int_{B(x,\frac{1}{4})}
+ \int_{\{x:\,x_d\geq \frac{n}{2}\}} \int_{B(x,\frac{n}{4})}
+ \int_D \int_{D\setminus B(x,\frac{n}{4})}
+ \int_{P_n} \int_{D\cap B(x,\frac{1}{4},\frac{n}{4})}\\
&&+ \int_{\{x:0<x_d<2\}} \int_{D \cap B(x,\frac{1}{4},\frac{n}{4})}
+ \int_{L_n} \int_{D\cap B(x,\frac{1}{4},\frac{n}{4})}\\
&=&I_1+I_2+I_3+I_4+I_5+I_6,
\end{eqnarray*}
where
\begin{eqnarray*}
P_n &=& \{x\in\R^d : \,\|x'\|\geq n^2-n\,,\; 0<x_d<\frac{n}{2} \};\\
L_n&=&\{x\in \R^d : \|x'\| < n^2-n\,,\; 2 \leq x_d <\frac{n}{2} \},
\end{eqnarray*}
for $d\geq 2$, and $P_n=\emptyset$, $L_n=(2,\frac{n}{2})$ for $d=1$.
We estimate the integrals $I_k$ in a similar way as for $\alpha\geq 1$.
The details are left to the reader. 
\end{proof}
Similar but simpler estimates were
given in \cite{Dijm2004} to prove that 
the Hardy constant of a bounded Lipschitz domain (e.g. of an interval) is
zero if $\alpha\leq 1$. 
We also like to mention that there is an alternative proof of
Lemma~\ref{meczacy2} (not given here), which explicitly
uses the fact that that $w^2(x)=x_d^{\alpha-1}$ is harmonic (\cite{BBC}) for
$\Delta^{\alpha/2}_D$. Similarly, the best constant, $1/4$, in the
classical Hardy inequality for the half-space $D$ is obtained by
considering $w(x)=\sqrt{x_d}$ in Fitzsimmons' ratio $\nu=-\Delta w/w$.

\begin{ack}
{\rm
We thank Grzegorz Karch for a discussion on recent results on
non-local operators, and Rodrigo Ba\~nuelos, Ma{\l}gorzata Kuchta and Jacek Zienkiewicz for 
discussions on classical Hardy inequality. 
}  
\end{ack}

\bibliographystyle{abbrv}
\bibliography{bibca}

\end{document}